\numberwithin{equation}{section}															
\numberwithin{figure}{section}																
\numberwithin{table}{section}																
\newcommand{\R}{\mathbb R}
\newcommand{\rank}{\mbox{rank}}
\newtheorem{theorem}{Theorem}  
\newtheorem{corollary}[theorem]{Corollary}
\newtheorem{lemma}[theorem]{Lemma}
\newtheorem{proposition}[theorem]{Proposition}
\newcommand{\e}{\pmb{e}}
\newcommand{\ld}{\pmb{\lambda}}
\newcommand{\x}{\pmb{x}}
\title{Equilibrium Locus of The Flow on Circular Networks of Cells}
\author{Yirmeyahu J. Kaminski\\
Depart. of Applied Mathematics, \\
Holon Institute of Technology, \\
Holon, Israel.}
\date{}
\begin{document}

\maketitle

\begin{abstract}
We perform a geometric study of the equilibrium locus of the flow that models the diffusion process over a circular network of cells. We prove that when considering the set of all possible values of the parameters, the equilibrium locus is a smooth manifold with corners, while for a given value of the parameters, it is an embedded smooth and connected curve. For different values of the parameters, the curves are all isomorphic. 

Moreover, we show how to build a homotopy between different curves obtained for different values of the parameter set. This procedure allows the efficient computation of the equilibrium point for each value of some first integral of the system. This point would have been otherwise difficult to be computed for higher dimensions. We illustrate this construction by some numerical experiments.

Eventually, we show that when considering the parameters as inputs, one can easily bring the system asymptotically to any equilibrium point in the reachable set, which we also easily characterize. 
\end{abstract}

\section{Introduction}

A circular network of cells is a set of cells that are connected in along a ring such that the last cell is connected to the first. An instance of this kind of model is the ribosome flow model on a ring has been introduced in~\cite{Raveh-all-2015}. 

Such a model is a parametric dynamical system defined as follows:
$$
\left \{ \begin{array}{ccl}
\dot{x}_1 & = & \lambda_n x_n(1-x_1) - \lambda_1 x_1(1-x_2) \\
\dot{x}_2 & = & \lambda_1 x_1(1-x_2) - \lambda_2 x_2(1-x_3) \\
\vdots & & \\
\dot{x}_{n-1} & = & \lambda_{n-2} x_{n-2}(1-x_{n-1}) - \lambda_{n-1} x_{n-1}(1-x_n) \\
\dot{x}_n & = & \lambda_{n-1} x_{n-1}(1-x_n) - \lambda_n x_n(1-x_1) 
\end{array} \right .
$$

The parameters $\lambda_1, \cdots, \lambda_n$ are real strictly positive numbers. They define the degree of diffusion between the cells. As shown in~\cite{Raveh-all-2015}, if the initial point lies in the hypercube $[0,1]^n$, then the system always stays within these limits. Then this system models the occupancy levels of a circular chain of $n$ sites (for example a circular DNA), while $\lambda_1, \cdots, \lambda_n$ are transition rates.   

In this work, we focus on the study of the equilibrium locus of this system and in particular we study its dependency on the parameters.

We prove that the set of equilibrium points $(\lambda_1, \cdots, \lambda_n, e_1, \cdots, e_n)$ of the system, denoted $V_n$, is a smooth sub-manifold with corners of $R^{2n}$ which dimension is $n+1$. 

We also consider the projection onto the first summand $\pi: V_n \rightarrow (R_+^*)^n$,\\ $(\lambda_1, \cdots, \lambda_n, e_1, \cdots, e_n) \mapsto (\lambda_1, \cdots, \lambda_n)$ and prove it is a proper surjective submersion. Relying on Thom's First Isotopy theorem~\cite{Mather-2012}, we prove that for a given value of the parameters, the set of equilibrium points is an embedded smooth curve. Also since the base space of the projection is connected, for different values of the parameters, the curves are all isomorphic. Considering the fiber over $(1, \cdots, 1) \in (R_+^*)^n$, we show that all these curves at connected. 

Moreover, we show how to build a homotopy between different curves obtained for different values of the parameter set. This procedure allows the efficient computation of the equilibrium point for each value of some first integral of the system. This point would have been otherwise difficult to be computed for higher dimensions. We illustrate this construction by some numerical experiments.   

This homotopy path tracing method is proven to be valid, as we prove the existence of the path the numerical method allows to trace. 

Eventually, we show that when considering the parameters as inputs, one can easily bring the system asymptotically to any equilibrium point in the reachable set, which we also easily characterize.

\section{Parametric Manifold}
\label{sec::parametric_manifold}

Given the parameters $\lambda_1, \cdots, \lambda_n \in (\R_{+}^*)^n$, the equilibrium point $(e_1, \cdots, e_n) \in [0,1]^n$ must satisfy $f(\lambda_1, \cdots, \lambda_n, e_1, \cdots, e_n) = 0$, where:
$$
f(\ld,\e) = \left( \begin{array}{c} 
\lambda_n e_n (1-e_1) - \lambda_1 e_1 (1-e_2) \\
\vdots \\
\lambda_n e_n (1-e_1) - \lambda_{n-1} e_{n-1} (1-e_n)
\end{array} \right). 
$$

Here it is worth noting that $f$ has values in $\R^{n-1}$ and not in $\R^n$, since the vanishing of the $n-1$ first components of the vector field implies immediately the vanishing of the last one, which is up to the sign merely the sum of the $n-1$ first components. In addition the vanishing of all the $n-1$ first components of the vector field is easily seen to be equivalent to the vanishing of the $n-1$ components of the function $f$ defined above. 

For the sake of simplicity, we have denoted $\ld$ for $\lambda_1, \cdots, \lambda_n$ and $\e$ for $e_1, \cdots, e_n$. We shall now prove that $V_n = f^{-1}(0)$ is a smooth manifold with corners. 

The Jacobian matrix of $f$ with respect to $\ld$ is given by:
$$
J_{\ld} = \left[ \begin{array}{ccccc}
-e_1(1-e_2) & 0 & \cdots & 0 & e_n(1-e_1) \\
0 & -e_2(1-e_3) & \cdots & 0 & e_n(1-e_1) \\
\vdots & \vdots & \vdots & \vdots & \vdots \\
0 & \hdots & \hdots & -e_{n-1}(1-e_n) & e_n(1-e_1)
\end{array} \right]
$$

We want to determine the rank of this matrix. We shall use the following lemma.

\begin{lemma}
\label{lemma::boundary}
If $e_i = 0$ for some $i$, then $e_1 = \cdots = e_n =0$. If $e_i = 1$ for some $i$, then $e_1 = \cdots = e_n = 1$. 
\end{lemma}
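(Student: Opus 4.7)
The plan is to exploit the cyclic structure of the equilibrium conditions. Rearranging each of the equations $f_k(\ld,\e)=0$ yields
\[
\lambda_k e_k(1-e_{k+1}) \;=\; \lambda_n e_n(1-e_1),
\]
so the $n$ flux quantities $c_k := \lambda_k e_k(1-e_{k+1})$ (indices taken cyclically modulo $n$, with $e_{n+1} \equiv e_1$) all share a common value $c$ at any equilibrium. Both assertions then reduce to a one-step cyclic induction starting from the distinguished index $i$.

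For the first claim, assume $e_i = 0$. Then $c = c_i = 0$, hence $c_{i-1} = \lambda_{i-1} e_{i-1}(1-e_i) = 0$; since $1-e_i = 1$ and $\lambda_{i-1} > 0$, this forces $e_{i-1}=0$. Iterating this argument backward around the ring propagates the vanishing to every coordinate, yielding $e_1=\cdots=e_n=0$.

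For the second claim, assume $e_i = 1$. Then $c_{i-1} = \lambda_{i-1}e_{i-1}(1-e_i) = 0$, so again $c=0$; reading $c_i = \lambda_i(1-e_{i+1}) = 0$ gives $e_{i+1} = 1$, and iterating forward around the ring yields $e_1=\cdots=e_n=1$.

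I do not anticipate a serious obstacle. The only point requiring attention is that each inductive step selects the correct factor of the vanishing product, which it does because the complementary factor is either $1$ or the strictly positive parameter $\lambda_k$. The argument tacitly uses $\e \in [0,1]^n$, which is the regime in which the dynamics is considered and where the two factors $e_k$ and $1-e_{k+1}$ are both nonnegative.
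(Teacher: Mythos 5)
Your proof is correct: at equilibrium all the fluxes $c_k=\lambda_k e_k(1-e_{k+1})$ share a common value, and your cyclic propagation (backward from $e_i=0$, forward from $e_i=1$) is exactly the "algebraically obvious" argument the paper alludes to but omits. The one remark I would drop is the appeal to nonnegativity of the factors — each step already isolates the vanishing factor because the complementary factor is exactly $1$ or a strictly positive $\lambda_k$, so the argument does not actually need $\e\in[0,1]^n$.
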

%
%

This lemma is algebraically obvious and has a clear physical meaning. If the system is saturated in some bin, it is the case as well in all bins. If one bin is empty, so must be the other bins. 

Furthermore this lemma has an important mathematical implication. More explicitly this says that the only points $(\ld,\e)$ of $V_n$ for which $\e$ lies on the border of the hypercube $[0,1]^n$ are precisely $\e = (0,\cdots,0)$ and $\e = (1,\cdots,1)$. Thus when investigating the dependency on $\e$ of any claim concerning $V_n$, it is will useful to look separately at the three following cases: $\e \in (0,1)^n, \e = (0,\cdots,0)$ and $\e = (1,\cdots,1)$.

For $\e \in (0,1)^n$, the $J_{\ld}$ has full rank, that is $n-1$. For $\e = (0,\cdots,0)$ or $\e = (1,\cdots,1)$, $J_{\ld} = 0$. 

On the other hand, the Jacobian matrix of $f$ with respect to $\e$ is given by:

{\small
$$
J_{\e} = \left[ \begin{array}{cccccc} 

-\lambda_n e_n -\lambda_1(1-e_2) & \lambda_1 e_1 & 0 & \hdots & 0 & \lambda_n(1-e_1) \\
-\lambda_n e_n & -\lambda_2(1-e_3) & \lambda_2 e_2 & \hdots & 0 & \lambda_n(1-e_1) \\
\vdots & \vdots & \vdots & \vdots & \vdots & \vdots \\
-\lambda_n e_n & 0 & \hdots & 0 & -\lambda_{n-1}(1-e_n) & \lambda_n(1-e_1) + \lambda_{n-1} e_{n-1}
\end{array} \right]
$$
}

If $\e = (0,\cdots,0)$, then $J_{\e}$ is in row echelon form and no pivot vanishes. Thus in that case $J_{\e}$ has full rank. If $\e = (1,\cdots,1)$, a short computation also shows the matrix has full rank. 

From this simple argument we conclude that the level set $V_n$ is indeed a properly embedded sub-manifold~\cite{Lee-2013} of $(\R_{+}^*)^n \times [0,1]^n$ with corners such that $\partial V_n = (\R_{+}^*)^n \times \{(0, \cdots, 0), (1, \cdots, 1)\} = (\R_{+}^*)^n \times \{(0, \cdots, 0)\} \cup (\R_{+}^*)^n \times \{(1, \cdots, 1)\}$. Moreover $\dim(V_n) = 2n-(n-1) = n+1$. 

\section{The Fibers}

While our primary goal is to study the fiber over $\ld$ for each possible of value of the parameters, we start our investigation by considering the fiber over a given value of $\e$. This will allow in the sequel a deeper understanding of the main subject.

\subsection{Fibers Over $\e$}
\label{sec::fibers_over_e}

Given a point $\e \in [0,1]^n$, the fiber over $\e$ is by definition the set $\sigma_{\e} = \{\ld \in (\R_{+}^*)^n : f(\ld,\e) = 0\}$. 

When either $\e = (0, \cdots, 0)$ or $\e = (1, \cdots, 1)$, the set $\sigma_{\e}$ is obviously $(\R_{+}^*)^n$. 

Following lemma~\ref{lemma::boundary}, we consider now a given value $\e = (e_1, \cdots, e_n) \in (0,1)^n$. In that case the set of $\sigma_{\e}$ is simply of the form $\R_{+}^* \omega_{\e}$ with $\omega_{\e} \in (\R_{+}^*)^n$, since the system defining $\sigma_{\e}$ is linear in $\ld$ and homogeneous and the rank of $J_{\ld}$ is $n-1$. In the sequel, we shall assume that $\omega_{\e}$ is normalized: $\Vert \omega_{\e} \Vert = 1$, so that it is uniquely defined. Given the geometric form of $\sigma_{\e}$, we can conclude for two different points $\e$ and $\e'$ both in $[0,1]^n$, the fibers $\sigma_{\e}$ and $\sigma_{\e'}$ either coincide or has empty intersection. In the latter case, their closures intersect at the origin.

\subsection{Fibers Over $\ld$}
\label{sec::fiber_over_lambda}

Now let us consider the canonical projection: $\pi: V_n \longrightarrow (\R_{+}^*)^n, (\ld,\e) \mapsto \ld = (\lambda_1, \cdots, \lambda_n)$.  

Let us we denote $\tilde{\pi}$ the projection from $\R^n \times \R^n$ onto the first summand $\R^n$. Then $\pi$ is the restriction of $\tilde{\pi}$ to $V_n$. Also the differential of $\pi$ at some point $p$ of $V_n$ is the restriction of $d\tilde{\pi}$ to the tangent space $T_p(V_n)$ of $V_n$ at $p$. The projection $\pi$ fails to be a local submersion at points $p$ where $\dim(\tilde{\pi}(T_p(V_n))) < n$ or equivalently if $\dim(T_p(V_n)) \cap \ker(d\tilde{\pi}_p) > 1$. This would happen if the rank of the Jacobian matrix of $f$ with respect to $\e$ at $p$ is strictly less than $n-1$. Indeed we have the following sequence of facts:
\begin{enumerate}
\item $T_p(V_n) = \ker(df_p)$,
\item $T_p(V_n) \cap \ker(d\tilde{\pi}_p) = \{(0,\cdots,0,u_1,\cdots,u_n) \mid df_p(0,\cdots,0,u_1,\cdots,u_n) = 0\}$,
\item $\dim(T_p(V_n) \cap \ker(d\tilde{\pi}_p)) > 1 \Leftrightarrow \rank(J_{\e}(p)) < n-1$.
\end{enumerate}

Now let us show that $\rank(J_{\e}(p)) < n-1$ never occurs. Recall that:

{\small
$$
J_{\e} = \left[ \begin{array}{cccccc} 

-\lambda_n e_n -\lambda_1(1-e_2) & \lambda_1 e_1 & 0 & \hdots & 0 & \lambda_n(1-e_1) \\
-\lambda_n e_n & -\lambda_2(1-e_3) & \lambda_2 e_2 & \hdots & 0 & \lambda_n(1-e_1) \\
\vdots & \vdots & \vdots & \vdots & \vdots & \vdots \\
-\lambda_n e_n & 0 & \hdots & 0 & -\lambda_{n-1}(1-e_n) & \lambda_n(1-e_1) + \lambda_{n-1} e_{n-1}
\end{array} \right]
$$  
}

Let $A$ the matrix obtained from $J_{\e}$ by discarding the first column. Then $A$ is a square $(n-1) \times (n-1)$ matrix. Let us consider the expansion of its determinant with respect to the first line:
\begin{align*}
\lambda_1 e_1 [\lambda_2 e_2 \cdots \lambda_{n-2} e_{n-2} (\lambda_n (1-e_1) + \lambda_{n-1} e_{n-1})] + \\
(-1)^n \lambda_n (1-e_1)[(-\lambda_2(1-e_3)) \cdots (-\lambda_{n-1}(1-e_n))] = \\
\lambda_1 e_1 [\lambda_2 e_2 \cdots \lambda_{n-2} e_{n-2} (\lambda_n (1-e_1) + \lambda_{n-1} e_{n-1})] + \\
\lambda_n (1-e_1)[\lambda_2(1-e_3) \cdots \lambda_{n-1}(1-e_n)] 
\end{align*}


When the coordinates of $\e$ neither vanish nor are equal to $1$ (see lemma~\ref{lemma::boundary}), it is a sum of strictly positive terms. Thus it never vanishes. When $\e = (0,\cdots,0)$ or $\e = (1,\cdots,1)$, as mentioned in section~\ref{sec::parametric_manifold}, the matrix $J_{\e}$ has also full rank. This latter fact can also be seen directly from the explicit expression of the determinant.  

Eventually, these simple considerations yield the following lemma:

\begin{lemma}
The canonical projection $\pi: V_n \longrightarrow (\R_{+}^*)^n$ is a submersion at every point of $V_n$. 
\end{lemma}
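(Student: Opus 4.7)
The plan is to invoke the rank criterion established in the discussion preceding the lemma: at a point $p \in V_n$, the projection $\pi$ fails to be a submersion precisely when $\rank(J_{\e}(p)) < n-1$. So it suffices to exhibit, for every $p \in V_n$, an $(n-1)\times(n-1)$ minor of $J_{\e}$ with nonzero determinant. I would split into three cases according to Lemma~\ref{lemma::boundary}: $\e \in (0,1)^n$, $\e = (0,\ldots,0)$, and $\e = (1,\ldots,1)$.

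The boundary cases are already handled in Section~\ref{sec::parametric_manifold}: for $\e=(0,\ldots,0)$ the matrix $J_{\e}$ is in row echelon form with no vanishing pivot, and for $\e=(1,\ldots,1)$ a direct inspection shows full rank. Both give $\rank(J_{\e})=n-1$ immediately.

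For the interior case, I would take $A$ to be the square $(n-1)\times(n-1)$ submatrix obtained by discarding the first column of $J_{\e}$. Having removed the first column, the resulting matrix has a very sparse structure: a diagonal of entries $\lambda_i e_i$ (shifted by one), a subdiagonal of $-\lambda_{i+1}(1-e_{i+2})$, and a last column of $\lambda_n(1-e_1)$ (with an extra $+\lambda_{n-1}e_{n-1}$ in the bottom-right corner). Expanding $\det A$ along the first row, only the $(1,1)$ entry $\lambda_1 e_1$ and the $(1,n-1)$ entry $\lambda_n(1-e_1)$ survive, and each of the corresponding minors is triangular and so computed by a single product.

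The only real obstacle is sign bookkeeping: one must verify that the $(-1)^{1+(n-1)}=(-1)^n$ coming from the cofactor of the $(1,n-1)$ entry exactly cancels the $n-2$ minus signs from the subdiagonal entries $-\lambda_i(1-e_{i+1})$, so that both surviving summands end up being products of strictly positive quantities $\lambda_i$, $e_i$, $1-e_i$. Once the signs are tracked, $\det A$ is a sum of strictly positive terms whenever $\e\in(0,1)^n$, hence nonzero. Combining the three cases, $\rank(J_{\e}(p))=n-1$ everywhere on $V_n$, which by the sequence of equivalences (1)--(3) above gives $\dim(T_p(V_n)\cap\ker(d\tilde\pi_p))=1$, so $d\pi_p$ is surjective onto $\R^n$ and $\pi$ is a submersion at $p$.
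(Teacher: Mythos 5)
Your proposal is correct and follows essentially the same route as the paper: reduce to the rank condition $\rank(J_{\e}) = n-1$, treat the boundary points via Lemma~\ref{lemma::boundary} and the echelon-form argument of Section~\ref{sec::parametric_manifold}, and for interior points delete the first column of $J_{\e}$ and expand the resulting $(n-1)\times(n-1)$ determinant along its first row, checking that the cofactor sign $(-1)^n$ cancels the $n-2$ subdiagonal minus signs so the determinant is a sum of strictly positive terms. This is exactly the computation carried out in the paper, so no further comment is needed.
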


Note that one could reach the same conclusion by considering the discriminant variety of the system~\cite{Lazard-Rouillier-2007}.

Now we turn our attention to the surjectivity of $\pi$ and prove the following lemma:

\begin{lemma}
The canonical projection $\pi: V_n \longrightarrow (\R_{+}^*)^n$ is surjective. 
\end{lemma}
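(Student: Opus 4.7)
The plan is essentially a one-line check using Lemma~\ref{lemma::boundary}. Fix any $\ld \in (\R_{+}^*)^n$. The idea is to exhibit a preimage directly by choosing $\e$ at the boundary corner. By direct substitution, every component of $f$ is of the form $\lambda_n e_n(1-e_1) - \lambda_i e_i(1-e_{i+1})$, and hence evaluates identically to zero when either $\e = (0,\ldots,0)$ or $\e = (1,\ldots,1)$. Consequently $(\ld, 0, \ldots, 0) \in V_n$ and $\pi(\ld, 0, \ldots, 0) = \ld$, which already establishes surjectivity. The symmetric point $(\ld, 1, \ldots, 1)$ provides a second preimage.

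A more informative route would additionally produce an interior preimage for each $\ld$. For that I would exploit the fact that the components of the vector field telescope, yielding the conserved quantity $H(\x) = \sum_{i=1}^n x_i$. Each intersection $\{H = c\} \cap [0,1]^n$ for $c \in (0,n)$ is then a compact convex set which is forward-invariant under the flow, so a fixed-point argument (Brouwer applied to a time-$t$ map combined with the LaSalle invariance principle, or alternatively the monotone systems framework of~\cite{Raveh-all-2015}) yields an equilibrium on each such level. Letting $c$ range over $(0,n)$ would then sweep out the entire fiber of $\pi$ above $\ld$, which is consistent with the curve structure the paper is heading toward.

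The main obstacle is therefore essentially nothing for the bare surjectivity claim as stated, since the two corner equilibria already finish the job and no new machinery is needed beyond Lemma~\ref{lemma::boundary}. Any genuine work would only appear in the stronger interior version, where one must import existence tools from fixed-point theory or monotone dynamical systems to guarantee an equilibrium on every level set of $H$. For the current lemma I would simply record the corner-point computation.
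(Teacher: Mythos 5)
Your corner-point argument is correct and settles the lemma as literally stated: by the paper's own construction $V_n=f^{-1}(0)\subset(\R_{+}^*)^n\times[0,1]^n$ contains $(\ld,\pmb{0})$ and $(\ld,\pmb{1})$ for every $\ld$ (these are exactly the points the paper identifies as $\partial V_n$), and each component of $f$ is $\lambda_n e_n(1-e_1)-\lambda_i e_i(1-e_{i+1})$, which vanishes at both corners. So $\pi$ is onto, with no machinery at all. This is, however, genuinely different from what the paper does: the paper's proof deliberately produces an \emph{interior} preimage $\e\in(0,1)^n$ for every $\ld$, via a compactness argument on the simplex $H=\{\sum_i e_i=1\}$ showing that the image of the map $\phi(\e)=(e_n(1-e_1),e_1(1-e_2),\dots,e_{n-1}(1-e_n))$ contains a full quadrant of a small ball around the origin, into which one can fit a ray direction prescribed by $\ld$. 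That stronger statement is what actually matters downstream (nonemptiness of the slices $V_{s,n}$ for $s\in(0,n)$ and the claim that each fiber is a curve joining $\pmb{0}$ to $\pmb{1}$), although the paper could also recover it from your weak version combined with the later fibration argument, since the fiber over $\pmb{1}$ visibly has interior points and all fibers are isomorphic. One caution on your sketched "more informative route": Brouwer applied to a time-$t$ map of the flow only yields a fixed point of that map, i.e.\ a $t$-periodic orbit, not directly an equilibrium; upgrading this to an equilibrium requires an extra step (e.g.\ intersecting fixed-point sets over a sequence $t\to 0$, or invoking the convergence results of~\cite{Raveh-all-2015}). Since you offer that route only as an optional strengthening, it does not affect the validity of your main argument.
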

\begin{proof}
Consider $\ld \in (\R_{+}^*)^n$. Let $m < \min(\lambda_1, \cdots , \lambda_{n-1})$. 

Consider $\tau \in (0,1)$. Then we can choose $\alpha \in (0,1)$, such that $\lambda_n \alpha < m \tau$. Similarly, we can choose $\beta_1 , \cdots, \beta_{n-1} \in (0,1)$, such that $\lambda_n \alpha = \lambda_i \beta_i$ for all $i = 1, \cdots , n-1$. 

We shall now prove that we can find $(e_1, \cdots, e_n) \in (0,1)^n$ such that $\alpha = e_n(1-e_1)$ and $\beta_i = e_i(e-e_{i+1})$ for all $i=1, \cdots, n-1$, provided we choose $\tau$ wisely. 

For that purpose, consider the map: $\phi: [0,1]^n \longrightarrow [0,1]^n, (e_1, \cdots, e_n) \mapsto (e_n(1-e_1), e_1(1-e_2), \cdots, e_{n-1}(1-e_n))$. The map is continuous and defined on a compact set. Consider the subset $H = \{\e \in [0,1]^n : \sum_i e_i = 1\}$. Being a closed subset of compact set, it is compact as well. Therefore $\phi(H)$ is compact. Now let $N$ or $\Vert . \Vert$ be the Euclidean norm in $\R^n$ and let $\psi = N \circ \phi$. Let $\rho$ be the minimum of $\{\Vert x \Vert : x \in \phi(H)\} = \psi(H)$. Of course $\rho \in \psi(H)$. Let ${\bf u} = (u_1, \cdots, u_n) \in [0,1]^n$, such that $\psi(u) = \rho$. If $\rho = 0$, then $u_n(1-u_1) = u_1(1-u_2) = \cdots = u_{n-1}(1-u_n) = 0$. This implies that either ${\bf u} = (0, \cdots ,0)$ or ${\bf u} = (1, \cdots, 1)$. In both cases, ${\bf u} \not \in H$. Therefore we have $\rho > 0$. 

In addition if one denotes by $(a_1 , \cdots, a_n)$ the standard basis of $\R^n$, each segment $s_i = \{\eta a_i : 0 \leq \eta \leq 1\}$ is mapped by $\phi$ to $s_{i+1}$. In order to avoid cumbersome notations, we use the following rule: if $(k+1)n \geq j > kn$, $s_j$ (respectively $a_j$) denotes in fact $s_{j-kn}$ (respectively $a_{j-kn}$). With this rule granted, let us define the triangles: $f_i = \{t \eta a_i + (1-t) \eta a_{i+1} : \eta, t \in [0,1]\}$.    

Let $T \subset \R$ be the $n-$dimensional pyramid which basis is $H$ and which top vertex is $0$. The boundary $\partial T$ of $T$ is the union $\partial T = f_1 \cup \cdots \cup f_n \cup H$. The image of $T$ by $\phi$ is the connected set whose boundary is $\phi(f_1) \cup \cdots \cup \phi(f_n)  \cup \phi(H)$. Note that $\phi(f_i) = \{(t \eta - t(1-t)\eta^2) a_{i+1} + (1-t) \eta a_{i+2} : \eta, t \in [0,1]\} \subset f_{i+1}$.

Therefore by the above considerations, $\phi(T)$ contains the upper-right quadrant of the ball centered at the origin and of radius $\rho$, i.e $B(0,\rho) \cap T$. 

It is now sufficient to choose $\tau$ such that $m \tau < \rho$. Then $(\alpha, \beta_1, \cdots, \beta_{n-1}) \in B(0,\rho) \cap \phi(T)$. Then for the given values of $\lambda_1, \cdots, \lambda_n$, there exists $\e \in (0,1)^n$, such that $\pi(\ld,\e) = \ld$. 
 \end{proof}

Finally we shall prove this last lemma:

\begin{lemma}
The canonical projection $\pi$ is proper.
\end{lemma}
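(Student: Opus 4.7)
The plan is to use the characterization of properness as: the preimage of every compact set is compact. The essential input is already built into the setup: by construction (and by the lemma recalled from the introduction guaranteeing that the dynamics preserves the hypercube), we have $V_n \subset (\R_{+}^*)^n \times [0,1]^n$, so every fiber of $\pi$ is contained in the compact $\e$-range $[0,1]^n$. This is the whole reason properness is essentially free here.

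Concretely, I would proceed as follows. Let $K \subset (\R_{+}^*)^n$ be an arbitrary compact subset. Then
\[
\pi^{-1}(K) \subset K \times [0,1]^n,
\]
and the right-hand side is compact as a product of two compact sets. Since $f : (\R_{+}^*)^n \times [0,1]^n \to \R^{n-1}$ is continuous, the set $V_n = f^{-1}(0)$ is closed in $(\R_{+}^*)^n \times [0,1]^n$, and hence
\[
\pi^{-1}(K) = V_n \cap \bigl(K \times [0,1]^n\bigr)
\]
is closed in $K \times [0,1]^n$. A closed subset of a compact set is compact, so $\pi^{-1}(K)$ is compact, which is exactly properness of $\pi$.

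If one prefers a sequential formulation, the same argument reads: given a sequence $(\ld^{(k)},\e^{(k)}) \in \pi^{-1}(K)$, extract a subsequence along which $\ld^{(k)} \to \ld^{(\infty)} \in K$ (using compactness of $K$) and $\e^{(k)} \to \e^{(\infty)} \in [0,1]^n$ (using compactness of the hypercube); by continuity $f(\ld^{(\infty)},\e^{(\infty)}) = 0$, so the limit point lies in $\pi^{-1}(K)$.

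There is no genuine obstacle here; the only point worth emphasising is that the target of $\pi$ is $(\R_{+}^*)^n$ rather than $\R^n$, so one must verify that compact subsets of $(\R_{+}^*)^n$ are still relatively closed in the ambient space used to define $V_n$ — which they are, so no trouble arises. The properness thus follows immediately from the compactness of the $\e$-factor $[0,1]^n$.
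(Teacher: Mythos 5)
Your proof is correct and follows essentially the same route as the paper: both observe that $\pi^{-1}(K) = V_n \cap (K \times [0,1]^n)$ is a closed subset of a compact product, using the compactness of the $\e$-factor $[0,1]^n$. Nothing further is needed.
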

\begin{proof}
Let $K$ be a compact set in $(\R^{*}_{+})^n$. We need to prove that $\pi^{-1}(K)$ is compact in the subspace topology of $V_n$.
Since $\pi^{-1}(K) = \{(\ld,\e) \in K \times [0,1]^n : f(\ld,\e) =0\} = (K \times [0,1]^n) \cap V_n$, it a closed subset of a compact set. Therefore it is compact as well in $(K \times [0,1]^n)$. Thus $\pi^{-1}(K)$ is also compact in $V_n$.   
\end{proof}

Stacking together these three results, we get the following theorem:
\begin{theorem}
\label{theo::fibers}
For each $\ld \in (\R^{*}_{+})^n$, the fiber of the canonical projection $\pi$ is a one-dimensional smooth manifold. Moreover all the fibers are isomorphic.
\end{theorem}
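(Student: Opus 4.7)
The plan is to combine the three preceding lemmas via Thom's First Isotopy Theorem (or equivalently Ehresmann's theorem in the boundaryless interior, extended to manifolds with corners).

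First I would extract the dimensional statement. Since $\pi$ is a smooth submersion at every point of $V_n$, and $\dim V_n = n+1$ while the base $(\R_{+}^*)^n$ has dimension $n$, the regular value theorem (in the setting of manifolds with corners, since the only boundary strata of $V_n$ are the two slices $(\R_{+}^*)^n \times \{(0,\cdots,0)\}$ and $(\R_{+}^*)^n \times \{(1,\cdots,1)\}$, each of which is transverse to the fibers of $\pi$) implies that for every $\ld \in (\R_{+}^*)^n$ the fiber $\pi^{-1}(\ld)$ is a smooth one-dimensional submanifold with boundary, whose boundary consists of the two distinguished points $(\ld,0,\cdots,0)$ and $(\ld,1,\cdots,1)$.

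Second, I would combine submersivity with properness and surjectivity to obtain a locally trivial fibration. Since $\pi$ is a proper surjective submersion and $V_n$ is a manifold with corners whose boundary pieces map submersively to the base (in fact, each boundary slice is mapped diffeomorphically onto $(\R_{+}^*)^n$), Thom's First Isotopy Theorem~\cite{Mather-2012} applies and yields that $\pi$ is a locally trivial fiber bundle over $(\R_{+}^*)^n$. In particular, the fiber over any point is compact (by properness) and one-dimensional (by the previous step), hence a disjoint union of closed intervals and circles.

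Third, I would invoke connectedness of the base. Because $(\R_{+}^*)^n$ is connected, the local triviality given by the previous step forces all fibers of $\pi$ to be pairwise diffeomorphic. This gives the ``all fibers are isomorphic'' part of the statement.

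The main technical point I expect to have to handle carefully is the application of Thom's theorem in the manifold-with-corners setting: one must check that $\pi$ restricted to each boundary stratum (the two slices $\e = (0,\cdots,0)$ and $\e = (1,\cdots,1)$) is itself a submersion (in fact a diffeomorphism onto the base), which is immediate from the explicit form of $V_n$, so that the stratified version of the first isotopy theorem applies and produces local trivializations compatible with the corner structure. Once this is verified, the three ingredients (submersivity, surjectivity, properness) combine cleanly to deliver both conclusions.
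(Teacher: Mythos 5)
Your proposal follows essentially the same route as the paper: use the three lemmas (submersion, surjectivity, properness) to get one-dimensional fibers from the dimension count, then apply Thom's First Isotopy Theorem together with connectedness of $(\R_{+}^*)^n$ to conclude all fibers are isomorphic. Your additional care about the boundary strata $\e = (0,\cdots,0)$ and $\e = (1,\cdots,1)$ mapping submersively to the base is a point the paper leaves implicit, and it is a worthwhile check for the stratified version of the theorem.
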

\begin{proof}
Since the map $\pi$ is a surjective submersion, all the fibers are smooth manifold of dimension $\dim(V_n) - n = n+1 -n = 1$. In addition, since the map is proper, by Thom's First Isotopy Theorem~\cite{Mather-2012}, the map is locally a trivial fibration. Since $(\R^{*}_{+})^n$ is connected, the fibers are all isomorphic one each other.    
\end{proof}

One can wonder if two fibers can coincide or intersect elsewhere than at $(0, \cdots, 0)$ and $(1, \cdots, 1)$. Consider two given points $\ld$ and $\ld'$ in $(\R_{+}^*)^n$. If there exists $\e \in (0,1)^n$, which lies on both fibers obver $\ld$ and $\ld'$, then relying on the notations of section~\ref{sec::fibers_over_e}, we observe that $\ld, \ld' \in \sigma_{\e}$. Therefore one can conclude with the following lemma.

\begin{lemma}
For $\ld$ and $\ld'$ in $(\R_{+}^*)^n$, the fibers over $\ld$ and $\ld'$ coincide if and only if $\ld$ and $\ld'$ are collinear, . 
\end{lemma}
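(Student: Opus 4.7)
The plan is to handle the two directions of the equivalence separately, exploiting the fact that $f(\ld,\e)$ is linear and homogeneous in $\ld$ for any fixed $\e$.

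For the backward direction, suppose $\ld' = c\,\ld$ with $c > 0$. Inspecting the explicit expression of $f$ shows that every component is a linear homogeneous function of $\ld$, hence $f(\ld',\e) = c\, f(\ld,\e)$ for every $\e \in [0,1]^n$. Consequently $f(\ld',\e) = 0$ if and only if $f(\ld,\e) = 0$, so the two fibers over $\ld$ and $\ld'$ consist of exactly the same $\e$'s.

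For the forward direction, assume the two fibers coincide. By Theorem~\ref{theo::fibers} each fiber is a smooth $1$-dimensional manifold, and by Lemma~\ref{lemma::boundary} the only boundary points it can meet are $(0,\dots,0)$ and $(1,\dots,1)$; since a $1$-dimensional manifold is never reduced to two isolated points, the common fiber must contain at least one point $\e^{*} \in (0,1)^n$ (the explicit construction in the surjectivity lemma also exhibits such a point). Then $\ld$ and $\ld'$ both lie in $\sigma_{\e^{*}}$, and Section~\ref{sec::fibers_over_e} tells us that $\sigma_{\e^{*}} = \R_{+}^{*}\,\omega_{\e^{*}}$ is a single open ray through the origin. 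Therefore $\ld$ and $\ld'$ are both positive multiples of the same vector $\omega_{\e^{*}}$, which is precisely the statement that they are collinear.

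The only non-cosmetic point is guaranteeing that the common fiber has a point in the open hypercube, and this is essentially free once we invoke the earlier results: the dimension count from Theorem~\ref{theo::fibers} rules out the fiber being exhausted by the two boundary points, and the explicit construction in the proof of surjectivity provides such a point directly. The rest is pure linear algebra on the already-computed ray structure of $\sigma_{\e}$, so no significant obstacle is expected.
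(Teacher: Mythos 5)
Your proof is correct and follows essentially the same route as the paper: the paper's (very terse) justification is precisely the observation that a shared interior point $\e$ forces $\ld,\ld'\in\sigma_{\e}=\R_{+}^{*}\,\omega_{\e}$, combined with the homogeneity of $f$ in $\ld$ for the converse. Your write-up merely makes explicit the one step the paper glosses over — that a fiber cannot consist only of the two corner points and hence contains a point of $(0,1)^n$ — which you justify adequately via the dimension count and the construction in the surjectivity lemma.
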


Now we shall prove that all fibers are connected. Given theorem~\ref{theo::fibers}, it is sufficient to prove that the fiber over $\ld = \pmb{1} = (1, \cdots, 1)$ is connected. As a matter of fact, in the sequel, the vector $\alpha \pmb{1}$ for $\alpha \in \R$ will be denoted in bold font as follows $\pmb{\alpha}$. 

For $\ld \in (\R^{*}_{+})^n$, let us simply write $\pi_{\ld} = \{\e \in [0,1]^n : (\ld,\e) \in V_n,\pi(\ld,\e) = \ld \}$ for the fiber. It is obvious that $\pi_{\pmb{1}}$ contains the segment $[\pmb{0},\pmb{1}]$. We want to prove that $\pi_{\pmb{1}}$ is restricted exactly to this segment. For this purpose we shall consider the projection $\phi: \pi_{\pmb{1}} \backslash \{\pmb{0},\pmb{1}\} \longrightarrow (0,1), \e \mapsto e_1$.

\begin{lemma}
The map $\phi$ is surjective.
\end{lemma}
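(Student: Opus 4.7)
The plan is to exhibit, for every $e_1 \in (0,1)$, an explicit point of $\pi_{\pmb{1}} \setminus \{\pmb{0},\pmb{1}\}$ whose first coordinate is $e_1$. The natural candidate is the diagonal point $\e = (e_1,e_1,\ldots,e_1)$, which is the unique ``constant occupancy'' profile with first component $e_1$.

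The first step is to verify that $(\pmb{1},\e) \in V_n$. Specializing the system $f(\ld,\e)=0$ to $\ld = \pmb{1}$, the equilibrium equations become
\[
 e_n(1-e_1) \;=\; e_1(1-e_2) \;=\; e_2(1-e_3) \;=\; \cdots \;=\; e_{n-1}(1-e_n).
\]
Plugging in $e_i = e_1$ for every $i$, each of the above expressions reduces to $e_1(1-e_1)$, so $f(\pmb{1},\e)=0$. Thus $\e \in \pi_{\pmb{1}}$.

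The second step is to note that $\e \notin \{\pmb{0},\pmb{1}\}$, which is immediate from $e_1 \in (0,1)$. Hence $\e$ lies in the domain of $\phi$ and obviously $\phi(\e) = e_1$, establishing surjectivity.

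Since the desired preimage is produced by a one-line construction, there is no real obstacle; the content of the argument is simply the observation that the system with $\ld=\pmb{1}$ admits the entire diagonal as equilibria. The substantive part of showing $\pi_{\pmb{1}} = [\pmb{0},\pmb{1}]$ will instead lie in the companion injectivity statement for $\phi$, which rules out any equilibrium off the diagonal and will have to exploit the specific algebraic form of the cyclic equations $e_i(1-e_{i+1}) = \text{const}$ together with the connectedness of $\pi_{\pmb{1}}$ as a one-dimensional manifold.
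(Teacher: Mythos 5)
Your proof is correct and is essentially identical to the paper's: both exhibit the diagonal point $\pmb{\alpha}=(\alpha,\ldots,\alpha)$ as an explicit preimage of each $\alpha\in(0,1)$, noting that all the cyclic expressions reduce to $\alpha(1-\alpha)$ when $\ld=\pmb{1}$. You merely spell out the verification that the paper leaves implicit.
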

\begin{proof}
Indeed for any value $\alpha \in (0,1)$, the point $\pmb{\alpha}$ lies in $\pi_{\pmb{1}} \backslash \{\pmb{0},\pmb{1}\}$ and $\phi(\pmb{\alpha}) = \alpha$. 
\end{proof}

\begin{lemma}
The map $\phi$ is a submersion.
\end{lemma}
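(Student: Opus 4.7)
The plan is to reduce the submersivity of $\phi$ to the invertibility of a submatrix of $J_{\e}$ whose determinant has already been computed in section~\ref{sec::fiber_over_lambda}. First, by lemma~\ref{lemma::boundary}, any $\e \in \pi_{\pmb{1}} \setminus \{\pmb{0},\pmb{1}\}$ necessarily has all coordinates strictly in $(0,1)$, so we are always working at interior points of the hypercube and of the manifold $\pi_{\pmb{1}}$.

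Next, I would identify the tangent space. Since $\pi_{\pmb{1}}$ is cut out in $\{\pmb{1}\} \times [0,1]^n$ by the $n-1$ equations $f(\pmb{1},\e)=0$, its tangent space at $\e$, viewed inside $\R^n$, is $\ker J_{\e}(\pmb{1},\e)$; by theorem~\ref{theo::fibers}, this kernel is one-dimensional. The differential $d\phi_\e$ is then the linear form $(u_1,\ldots,u_n) \mapsto u_1$ restricted to this line.

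Because both source and target of $\phi$ have dimension $1$, submersivity at $\e$ is equivalent to $d\phi_\e \neq 0$ on $T_\e \pi_{\pmb{1}}$, i.e.\ to requiring that the only element of $\ker J_{\e}(\pmb{1},\e)$ with vanishing first component is the zero vector. Splitting $J_{\e} = [\,C_1 \mid A\,]$, where $C_1$ is the first column and $A$ is the remaining $(n-1)\times(n-1)$ block, this condition is exactly that $A$ be invertible.

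This is precisely the matrix $A$ whose determinant was expanded at the end of section~\ref{sec::fiber_over_lambda}, and shown to be a sum of strictly positive terms whenever the coordinates of $\e$ lie strictly between $0$ and $1$. Specialising to $\ld = \pmb{1}$ and $\e \in (0,1)^n$ gives $\det A > 0$, hence $A$ is invertible at every point where $\phi$ is defined, so $\phi$ is a submersion. No real obstacle arises here: the only subtlety is matching the indexing of ``first column discarded'' from the earlier calculation with the current projection onto $e_1$, which is immediate.
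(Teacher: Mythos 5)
Your proof is correct and follows essentially the same route as the paper's: both reduce submersivity to the statement that no nonzero kernel vector of $J_{\e}$ has vanishing first component, which is exactly the invertibility of the submatrix $A$ obtained by deleting the first column, whose determinant was already shown to be a sum of strictly positive terms for $\e \in (0,1)^n$. Your version is slightly more explicit about identifying $T_{\e}\pi_{\pmb{1}}$ with $\ker J_{\e}$ and about why the condition is an equivalence rather than merely a sufficient criterion, but the substance is identical.
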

\begin{proof}
The map $\phi$ fails to be a submersion at points where the kernel of the Jacobian matrix $J_{\e}$ (for $\ld = \pmb{1}$) is orthogonal to the vector $a_1 = [1,0,\cdots,0]^t$. 

Since the kernel is a one-dimension real vector space, for this condition to be satisfied, it is necessary that the $(n-1)-$last columns of the Jacobian matrix $J_{\e}$ are linearly dependent. However, as mentioned above the determinant of this sub-matrix is a sum of strictly positive terms. Thus it never vanishes on $\pi_{\pmb{1}} \backslash \{\pmb{0},\pmb{1}\}$. Hence, the map $\phi$ is indeed a submersion over $\pi_{\pmb{1}} \backslash \{\pmb{0},\pmb{1}\}$. 
\end{proof}

\begin{lemma}
The map $\phi$ is proper. 
\end{lemma}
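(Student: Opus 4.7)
The plan is to prove properness in its most direct form: for every compact $K\subset(0,1)$, I would show that $\phi^{-1}(K)$ is compact. The essential geometric input is Lemma~\ref{lemma::boundary}, which guarantees that any sequence in $\pi_{\pmb{1}}\backslash\{\pmb{0},\pmb{1}\}$ whose first coordinate stays bounded away from $0$ and $1$ cannot accumulate at the removed vertices.

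First, I would observe that any compact $K\subset(0,1)$ is contained in some interval $[a,b]$ with $0<a\le b<1$; this quantifies the separation of $K$ from the endpoints of the open interval. Then $\phi^{-1}(K)\subset[0,1]^n$ is automatically bounded, so it will suffice to show that $\phi^{-1}(K)$ is closed in $[0,1]^n$: as a closed subset of a compact set, it will then be compact, and the compactness persists when viewed in the subspace topology of $\pi_{\pmb{1}}\backslash\{\pmb{0},\pmb{1}\}$.

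To prove closedness, I would take a sequence $\e^{(k)}\in\phi^{-1}(K)$ converging in $[0,1]^n$ to some limit $\e^*$. Continuity of $f$ gives $f(\pmb{1},\e^*)=0$, so $\e^*$ lies in the fiber $\pi_{\pmb{1}}$. Continuity of the first-coordinate projection together with the closedness of $K$ forces $e_1^*\in K\subset[a,b]\subset(0,1)$, hence $e_1^*\notin\{0,1\}$. Now invoking the contrapositive of Lemma~\ref{lemma::boundary}, no coordinate of $\e^*$ can be $0$ or $1$; in particular $\e^*\notin\{\pmb{0},\pmb{1}\}$. Consequently $\e^*\in\pi_{\pmb{1}}\backslash\{\pmb{0},\pmb{1}\}$ with $\phi(\e^*)=e_1^*\in K$, i.e.\ $\e^*\in\phi^{-1}(K)$, as required.

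I do not anticipate a genuine obstacle: the argument is a standard ``closed subset of a compact space is compact'' move. The only subtlety is to exploit the fact that $K$ is compact in the open interval $(0,1)$, and not merely in $[0,1]$, so that its points are uniformly bounded away from the two endpoints that correspond, via Lemma~\ref{lemma::boundary}, to the excised vertices $\pmb{0}$ and $\pmb{1}$ of the fiber $\pi_{\pmb{1}}$.
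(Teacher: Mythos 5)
Your proof is correct and follows essentially the same route as the paper: both enclose $K$ in $[a,b]\subset(0,1)$ and reduce to the fact that a closed, bounded subset of $\R^n$ is compact, the key point being that the constraint $e_1\in[a,b]$ keeps the preimage away from the excised vertices $\pmb{0}$ and $\pmb{1}$. Your appeal to Lemma~\ref{lemma::boundary} is slightly more than needed (the conclusion $e_1^*\in(0,1)$ already forces $\e^*\notin\{\pmb{0},\pmb{1}\}$), but this is harmless, and your sequential verification of closedness is if anything more explicit than the paper's.
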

\begin{proof}
Consider a compact subset $K \subset (0,1)$. Then $K$ is included in some closed interval $K \subset [a,b] \subset (0,1)$. Therefore $\phi^{-1}(K)$ is a closed subset of $\pi_{\pmb{1}} \backslash \{\pmb{0},\pmb{1}\}$, also included in $\phi^{-1}([a,b])$. The latter set is closed and bounded in $\R^n$ and thus is compact. Therefore $\phi^{-1}(K)$ is a closed subset of a compact and is compact as well in $\R^n$. Since $\pi_{\pmb{1}}$ is closed in $\R^n$, $\phi^{-1}(K)$ is also compact in $\pi_{\pmb{1}}$. Finally, since $\phi^{-1}(K) \subset \pi_{\pmb{1}} \backslash \{\pmb{0},\pmb{1}\}$, it is compact in the subset topology of the $\pi_{\pmb{1}} \backslash \{\pmb{0},\pmb{1}\}$.     
\end{proof}

\begin{corollary}
The fibers of $\phi$ are all isomorphic.
\end{corollary}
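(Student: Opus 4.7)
The plan is to mirror exactly the argument used earlier for Theorem~\ref{theo::fibers}, now applied to the map $\phi$ in place of $\pi$. The three preceding lemmas have just established that $\phi$ is a surjective, proper submersion from $\pi_{\pmb{1}} \backslash \{\pmb{0},\pmb{1}\}$ onto $(0,1)$, which are precisely the hypotheses needed to apply Thom's First Isotopy Theorem.

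First I would invoke Thom's First Isotopy Theorem (as cited via \cite{Mather-2012}): since $\phi$ is a proper submersion, it is a locally trivial fibration over $(0,1)$. In particular, every point $t \in (0,1)$ admits an open neighborhood $U \subset (0,1)$ and a diffeomorphism $\phi^{-1}(U) \simeq U \times \phi^{-1}(t)$ commuting with projection to $U$. Consequently all fibers lying over points in the same connected component of the base are diffeomorphic.

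Next I would observe that the base $(0,1)$ is connected. Therefore, covering any two points $\alpha, \alpha' \in (0,1)$ by a chain of local trivialization neighborhoods and composing the induced diffeomorphisms on the fibers, one obtains an isomorphism $\phi^{-1}(\alpha) \simeq \phi^{-1}(\alpha')$. This yields the conclusion that all fibers of $\phi$ are isomorphic.

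I do not anticipate a serious obstacle: the entire content of the corollary is packaged into the three preceding lemmas, and the argument is a verbatim replay of the proof of Theorem~\ref{theo::fibers}. The only minor point to check, were one to be careful, is that the domain $\pi_{\pmb{1}} \backslash \{\pmb{0},\pmb{1}\}$ is genuinely a smooth manifold without corners (the two problematic endpoints of $\pi_{\pmb{1}}$ having been excised) so that Thom's theorem applies without boundary complications; this is built into the very definition of the domain of $\phi$.
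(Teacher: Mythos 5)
Your argument is correct and is essentially the paper's own proof: a proper surjective submersion over the connected base $(0,1)$ is a locally trivial fibration, hence has mutually diffeomorphic fibers. The only cosmetic difference is that the paper invokes Ehresmann's theorem directly (noting in a footnote that Thom's First Isotopy Theorem is its generalization, needed earlier only because $V_n$ has corners, whereas $\pi_{\pmb{1}} \backslash \{\pmb{0},\pmb{1}\}$ is an honest manifold), while you cite Thom's theorem --- which of course still applies.
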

\begin{proof}  
Given the threee previous lemmas and by Ehresmann's theorem~\cite{Ehresmann}, $\phi$ is a locally trivial fibration \footnote{In this case, we made use of Ehresmann's theorem, while in theorem~\ref{theo::fibers}, we used the Thom first isotopy theorem, which is a generalization of it. In the latter case the use of the more general setting was necessary since $V_n$ is manifold with corners, while here $\pi_{\pmb{1}} \backslash \{\pmb{0},\pmb{1}\}$ is a mere manifold and the classical setting of Ehresmann is enough.}. Since $(0,1)$ is connected, all the fibers of $\phi$ are isomorphic. 
\end{proof}
 
Therefore, it is sufficient to compute the fiber over $\frac{1}{2}$ to understand the common structure of all the fibers. 

So consider that $e_1 = \frac{1}{2}$. Assume that $e_n > \frac{1}{2}$. Then given that $e_{n-1} = \frac{e_n}{2(1-e_n)}$, we have $e_{n-1} > e_n$. On the one hand, by induction we get a decreasing sequence $e_2 > e_3 > \cdots > e_n$. On the other hand, we have $e_2 = 1-e_n > e_n$. Thus $e_n < \frac{1}{2}$. Thus we ends up with a contradiction. Now let us assume that $e_n < \frac{1}{2}$. Then we get an increasing sequence $e_2 < e_3 < \cdots < e_n$. Together with $e_2 = 1-e_n$, we get this time that $e_n > \frac{1}{2}$. All together, we get that $e_n = e_1 = \frac{1}{2}$. Then it is obvious that $e_1 = \cdots = e_n = \frac{1}{2}$. Hence the fiber is made of a single point. Therefore all the fibers are singletons.
Thus there is a bijective submersion from $\pi_{\pmb{1}} \backslash \{\pmb{0},\pmb{1}\}$ to $(0,1)$. There the two manifolds are diffeomorphic. 
Then $\pi_{\pmb{1}} \backslash \{\pmb{0},\pmb{1}\}$ is connected. Stacking this to theorem~\ref{theo::fibers}, this yields the following corollary:

\begin{corollary} 
For each value of $\pmb{\lambda} \in (\R_{+}^{*})^n$, the fiber $\pi_{\ld}$ is a smooth connected curve. Moreover the fiber $\pi_{\pmb{1}}$ is the segment $[\pmb{0},\pmb{1}]$. 
\end{corollary}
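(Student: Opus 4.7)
The plan is to assemble the sequence of lemmas just proved into the corollary. The hard work has already been done: the map $\phi : \pi_{\pmb{1}} \setminus \{\pmb{0},\pmb{1}\} \longrightarrow (0,1)$ has been shown to be a proper surjective submersion with singleton fibers (via the induction on $e_1 = \tfrac{1}{2}$, made global by isomorphism of fibers of $\phi$). What remains is to translate these facts into the geometric conclusions claimed.

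First, I would verify the elementary inclusion $[\pmb{0},\pmb{1}] \subseteq \pi_{\pmb{1}}$ by direct substitution: for any $\alpha \in [0,1]$, plugging $\e = \pmb{\alpha}$ and $\ld = \pmb{1}$ into $f$ makes each component equal to $\alpha(1-\alpha)-\alpha(1-\alpha) = 0$. Next, I would promote $\phi$ to a global diffeomorphism: since $\phi$ is a submersion between smooth manifolds of the same dimension $1$, the inverse function theorem makes it a local diffeomorphism; combined with the bijectivity established via singleton fibers, this gives a diffeomorphism $\pi_{\pmb{1}} \setminus \{\pmb{0},\pmb{1}\} \simeq (0,1)$. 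Since every $\pmb{\alpha}$ with $\alpha \in (0,1)$ is already a preimage of $\alpha$ under $\phi$, bijectivity forces it to be the only one. Taking closures in $[0,1]^n$ and appending the two boundary points yields $\pi_{\pmb{1}} = [\pmb{0},\pmb{1}]$, which is manifestly a smooth connected curve with two boundary points.

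For the general case, I would invoke Theorem~\ref{theo::fibers}. Since $\pi$ is a proper surjective submersion and the base $(\R_+^*)^n$ is connected, Thom's First Isotopy theorem says every fiber $\pi_{\ld}$ is diffeomorphic (as a manifold with corners) to $\pi_{\pmb{1}}$, hence to the segment $[\pmb{0},\pmb{1}]$. In particular each $\pi_{\ld}$ is a smooth connected curve, establishing both assertions of the corollary.

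The only conceptual subtlety is ensuring that the bijective submersion $\phi$ really is a diffeomorphism and that the boundary structure is preserved under the Thom isotopy; the first is the inverse function theorem, and the second is guaranteed by Lemma~\ref{lemma::boundary}, which pins down the boundary of $V_n$ to precisely $\{\pmb{0},\pmb{1}\}$ in the $\e$-coordinates. No computation beyond the direct substitution in the first step is required.
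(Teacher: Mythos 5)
Your proposal is correct and takes essentially the same route as the paper: it assembles the preceding lemmas (properness, surjectivity, submersion, and the singleton-fiber computation at $e_1=\tfrac12$) to conclude that $\phi$ is a bijective local diffeomorphism onto $(0,1)$, identifies $\pi_{\pmb{1}}$ with the segment $[\pmb{0},\pmb{1}]$ using Lemma~\ref{lemma::boundary} to control the boundary points, and then transfers the conclusion to every fiber via Theorem~\ref{theo::fibers}. No gaps.
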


\section{Hyperplane Section of A Fiber and Homotopy Continuation}

We are now interested in analyzing the intersection between the fiber $\pi_{\ld}$ and the hyperplane $H_s = \{\e \in (0,1)^n : \sum_i e_i = s\}$. Obviously, this requires that $s \in [0,n]$. As mentioned in~\cite{Raveh-all-2015}, for each $s$ and each $\pmb{\lambda}$, there is a single equilibrium point in $H_s$. Moreover starting from any point in $H_s$, the system converges toward this equilibrium point.   
 
Here we shall prove that indeed the intersection $V_{s,n} = H_s \cap V_n$ is a smooth manifold isomorphic to $(\R^n_+)^n$. More precisely, we have the following proposition. 

\begin{proposition}
\label{prop::covering_map}
For $s \in (0,n)$, consider the intersection $V_{s,n} = H_s \cap V_n$. Then $V_{s,n}$ is a $n-$dimensional smooth variety and the restriction of the projection $\pi$ to $V_{s,n}$ is a diffeomorphism. 
\end{proposition}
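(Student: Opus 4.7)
The plan is to prove the proposition by applying the implicit function theorem, solving for $\e$ in terms of $\ld$. Introduce the map
\begin{equation*}
G : (\R_{+}^{*})^n \times (0,1)^n \longrightarrow \R^n, \qquad G(\ld, \e) = \bigl(\, f(\ld, \e),\; e_1 + \cdots + e_n - s\,\bigr),
\end{equation*}
so that $V_{s,n}$ is the zero set of $G$ inside the interior. The Jacobian of $G$ with respect to $\e$ is the $n \times n$ matrix
\begin{equation*}
M = \begin{pmatrix} J_{\e} \\ \mathbf{1} \end{pmatrix},
\end{equation*}
where $\mathbf{1} = (1, \ldots, 1)$. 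The crux of the proof is to verify that $\det M \neq 0$ at every point of $V_{s,n}$.

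Laplace expansion along the last row yields $\det M = \mathbf{1} \cdot w$, where $w = (w_1, \ldots, w_n)$ is the vector of signed cofactors $w_j = (-1)^{n+j} D_j$, and $D_j$ denotes the minor of $J_{\e}$ obtained by removing column $j$. A standard cofactor identity---replacing the $\mathbf{1}$ row by any row of $J_{\e}$ produces a matrix with two equal rows and hence vanishing determinant---gives $J_{\e}\, w = 0$, so $w \in \ker J_{\e}$. Since $J_{\e}$ has rank $n-1$ on the interior and the minor $D_1$ was already exhibited in section~\ref{sec::fiber_over_lambda} as a sum of strictly positive terms, $w$ is nonzero and therefore generates $\ker J_{\e}$. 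Consequently $\det M \neq 0$ is equivalent to the geometric statement that the tangent line to the fiber $\pi_{\ld}$ at $\e$ is transverse to the hyperplane $H_s$.

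Establishing this transversality is the main obstacle, and I would approach it in one of two complementary ways. (i) \emph{Direct sign analysis:} expand each cofactor $D_j$ using the almost tridiagonal structure of $J_{\e}$, and, after invoking the equilibrium identity $\la_i e_i (1 - e_{i+1}) = R$ (a common flow rate around the ring, independent of $i$), verify that all $w_j$ are of the same strict sign on $(0,1)^n$, so that $\mathbf{1} \cdot w \neq 0$. (ii) \emph{Dynamical argument:} the relation $f_i = F_1 + \cdots + F_i$, where $F$ is the original vector field from section~1, gives $\ker J_{\e} = \ker dF$ at equilibria (using that $\mathbf{1}$ is a left null vector of $dF$ and that the image of $dF$ sits inside $\{\mathbf{1} \cdot x = 0\}$); since the flow of $F$ preserves each $H_s$ and, by the convergence statement of~\cite{Raveh-all-2015}, admits a unique asymptotically stable equilibrium within it, the linearisation of $F$ restricted to $T_{\e} H_s$ is nonsingular, which excludes any nonzero element of $\ker J_{\e}$ from the hyperplane $\{v : \mathbf{1} \cdot v = 0\}$.

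With $\det M \neq 0$ in hand, the implicit function theorem applied at each $(\ld_0, \e_0) \in V_{s,n}$ furnishes a neighborhood $U \subset (\R_{+}^{*})^n$ of $\ld_0$ and a smooth map $g : U \to (0,1)^n$ with $G(\ld, g(\ld)) = 0$ and $g(\ld_0) = \e_0$. Thus $V_{s,n}$ is locally the graph of $g$---an embedded smooth $n$-manifold---and the restriction of $\pi$ to $V_{s,n}$ is a local diffeomorphism with local inverse $\ld \mapsto (\ld, g(\ld))$. Bijectivity of this restriction follows from the quoted uniqueness of the equilibrium in $H_s$ for each $\ld$, and a bijective local diffeomorphism between smooth manifolds is a global diffeomorphism, completing the proof.
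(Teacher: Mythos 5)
Your overall architecture is sound and close in spirit to the paper's: both proofs hinge on the non-vanishing of the determinant of the $n\times n$ matrix obtained by stacking $J_{\e}$ on the row $\mathbf{1}$ (the paper's $W$, your $M$), and your reformulation of $\det M \neq 0$ as transversality of the fiber's tangent line to $H_s$ --- via the cofactor vector $w$ spanning $\ker J_{\e}$ --- is correct and illuminating. The problem is that you then stop: the one step you yourself call ``the main obstacle'' is never carried out. Option (i) is only a plan; you do not compute the cofactors $D_j$ or verify that they share a strict sign, and this is a genuine piece of work (it amounts to proving that the equilibrium curve is componentwise monotone in $s$), not a routine check. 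Option (ii) contains a logical error: asymptotic stability of an equilibrium does \emph{not} imply that the linearisation there is nonsingular (consider $\dot x = -x^3$), so uniqueness plus convergence on $H_s$, as quoted from~\cite{Raveh-all-2015}, does not exclude a zero eigenvalue of $dF$ restricted to $T_{\e}H_s$. (Your algebraic identification $\ker J_{\e} = \ker dF$ at equilibria, using that $f_i = F_1 + \cdots + F_i$ and that $\mathbf{1}$ is a left null vector of $dF$, is correct; it is precisely the nonsingularity of $dF|_{T_{\e}H_s}$ that remains unproved.) As written, the proposal therefore has a genuine gap exactly at its crux.

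For comparison, the paper closes this step by a column manipulation: writing $W$ with first column $v$ and remaining block the invertible $(n-1)\times(n-1)$ minor $A$ already produced in Section~\ref{sec::fiber_over_lambda}, it reduces $\det W$ to $\det(A - v\mathbf{1}^{t})$ and argues that this is nonzero. It then obtains bijectivity of $\pi|_{V_{s,n}}$ not from the quoted uniqueness result but intrinsically, via properness, Ehresmann's theorem, connectedness of $(\R_+^*)^n$, and the explicit singleton fiber over $\pmb{1}$. Your substitution of the implicit function theorem for the submersion/Ehresmann machinery, and of the external existence-and-uniqueness statement for surjectivity and injectivity, is legitimate and arguably more elementary --- but only once $\det M \neq 0$ has actually been established.
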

\begin{proof}
First we shall prove that $V_{s,n}$ is indeed a smooth manifold. The set $V_{s,n}$ is defined as $f^{-1}(0) \cap h_s^{-1}(0)$ for $h_s(\ld,\e) = \sum_i e_i - s$. For any $s \in (0,n)$ and any $n \geq 1$, $V_{s,n} \neq \emptyset$. Indeed for a given value of $\ld$, the fiber $\sigma_{\ld}$ is a connected curve that joins the two extreme points $(0,\cdots,0)$ and $(1,\cdots,1)$, which are located each in each half space that $H_s \cap \R^n$ defines. 

Consider the function $g(\ld,\e) = (f(\ld,\e),h_s(\ld,\e)) \in \R^n$ and its Jacobian matrix at a point $(\ld,\e) \in V_{s,n}$:
{\small
$$
W = \left[ \begin{array}{cccccc} 

-\lambda_n e_n -\lambda_1(1-e_2) & \lambda_1 e_1 & 0 & \hdots & 0 & \lambda_n(1-e_1) \\
-\lambda_n e_n & -\lambda_2(1-e_3) & \lambda_2 e_2 & \hdots & 0 & \lambda_n(1-e_1) \\
\vdots & \vdots & \vdots & \vdots & \vdots & \vdots \\
-\lambda_n e_n & 0 & \hdots & 0 & -\lambda_{n-1}(1-e_n) & \lambda_n(1-e_1) + \lambda_{n-1} e_{n-1} \\
1 & 1 & \cdots & 1 & 1 & 1
\end{array} \right]
$$  
}

Above  in section~\ref{sec::fiber_over_lambda},  we have introduced the matrix $A$ which is the matrix obtained by discarding the first column and the last row of $W$. We have shown that $A$ is invertible for any value of $(\ld,\e) \in V_n$. Let $v$ be the first column of $W$ without the last element. Since $\rank(A) = n-1$, $v$ is a linear combination of the columns of $A$. Moreover $v$ is collinear with no column of $A$. Therefore $\det(A - V) \neq 0$ where $V$ is the $(n-1) \times (n-1)$ matrix which columns are all identically $v$. As easily seen $\det(W) = \det(A - V) \neq 0$. Thus $W$ is invertible for all $(\ld,\e) \in V_{s,n}$. Therefore $V_{s,n}$ is indeed a smooth variety of dimension $n$.

Now consider the restriction of the projection $\pi: V_n \rightarrow (\R^*_+)^n$ to $V_{s,n}$. It is a proper as $\pi$ is. It is surjective, since as argued just above each fiber of $\pi$ meets $H_s$ and $\pi$ is surjective. It is a submersion because $\pi$ is a submersion and the kernel of $d\pi$ at $(\ld,\e)$ is precisely the tangent space to the fiber and $V_{s,n}$ is transverse to each fiber in $V_n$. Indeed the tangent space to $V_{s,n}$ is precisely the direction of $H_s$, which is transverse to $V_n$ in $\R^{2n}$, since $\rank(W) = n$. 

Eventually, we can conclude that $\pi_{\vert V_{s,n}}$ is a proper surjective submersion. Again by Erhesmann's theorem, its fibers are all isomorphic since $(R_+^*)^n$ is connected. But over $(1,\cdots,1)$ the fiber is a singleton, since the fiber of $\pi$ is a segment that intersects the hyperplane $H_s$ at a single point. As a consequence $\pi_{\vert V_{s,n}}$ is a bijective submersion and is therefore a diffeomorphism.
\end{proof}

We shall rely on this proposition to prove the existence of the construction we introduce now.
  
In the simple case of $\pi_{\pmb{1}}$, we have immediately $H_s \cap \pi_{\pmb{1}} = \{\pmb{\frac{s}{n}}\}$. Here (as in the sequel) we have used the rule introduced in section~\ref{sec::fiber_over_lambda}, so that $\pmb{\frac{s}{n}} = (\frac{s}{n}, \cdots, \frac{s}{n}) \in \R^n$.  In this section, we shall show how to find the equilibrium point $\pi_{\ld} \cap H_s$ for any admissible value of the parameters $\ld$. 

Consider a straight segment line in $(\R_{+}^{*})^n$ joining $\ld_1 = \pmb{1}$ to another point $\ld_2$: $\ld(t) = (1-t)\ld_1 + t\ld_2$ for $0 \leq t \leq t$. This yields an homotopy between the fiber $\pi_{\ld_1} \cap H_s$ and the fiber $\pi_{\ld_2} \cap H_s$:
$$
H(\e,t) = (1-t)g(\ld_1,\e) + t g(\ld_2,\e),
$$ 
where $g(\ld,\e) = [f(\ld,\e),\sum_i e_i - s]^t \in \R^{n+1}$.
 
As we have seen solving $g(\ld_1,\e) = 0$ is extremely easy. For a given $s \in (0,n)$, starting from the solution $\{\pmb{\frac{s}{n}}\}$, we can follow a path embedded in $H_s$ that ends at the solution of $g(\ld_2, \e) = 0$. Consider $\gamma(t) = [x_1(t), \cdots, x_n(t)]^t$ such a path. By construction, we want it to satisfy for all $t \in [0,1]$: $H(\gamma(t),t) = 0$. This yields the following differential system:
$$
\sum_{j=1}^n \frac{\partial H_i}{\partial x_j}(\gamma(t),t) \frac{d x_j}{d t}(t) + \frac{\partial H_i}{\partial t}(\gamma(t),t) = 0, 
$$
where $H_1, \cdots, H_{n+1}$ are the components of $H$. This is a special case of homotopy continuation methods~\cite{Li-97}. This system yields a Newton like method for tracing the path $\gamma(t)$, since we have:
$$
\frac{d \gamma}{d t} (t) = - \left (J(H)_{\gamma(t)} \right)^{-1} \frac{\partial H}{\partial t}(\gamma(t),t),
$$
where $J(H)_{\gamma(t)}$ is the Jacobian matrix of $H$ computed for the $n$ first coordinate at the point $\gamma(t)$. One can easily write this differential equation explicitly.  

The existence of a solution to this non-linear equation on $[0,1]$ is granted as follows.

The function $g$ is linear with respect to $\lambda$. Thus $H(\gamma(t),t) = g((1-t)\ld_1 + t \ld_2,\gamma(t)) = 0$. Therefore $\gamma(t)$ is always an equilibrium point for each value of $t$. Hence the curve $t \mapsto ((1-t)\ld_1+t\ld_2,\gamma(t))$ is a lift of the segment joining $\ld_1$ to $\ld_2$ on the variety $V_{s,n}$. Such a lift exists and is unique by proposition~\ref{prop::covering_map}. 

In our case, the system is particularly simple since $H$ is a quadratic function of $\gamma(t)$. The procedure can be easily implemented. We have implemented the path tracing procedure in Python. In the figure~\ref{fig::homotopy_3d}, we show the homotopy path generated for a system with 3 bins. 

\begin{figure}[h!]
\begin{center}
\includegraphics[scale=0.3]{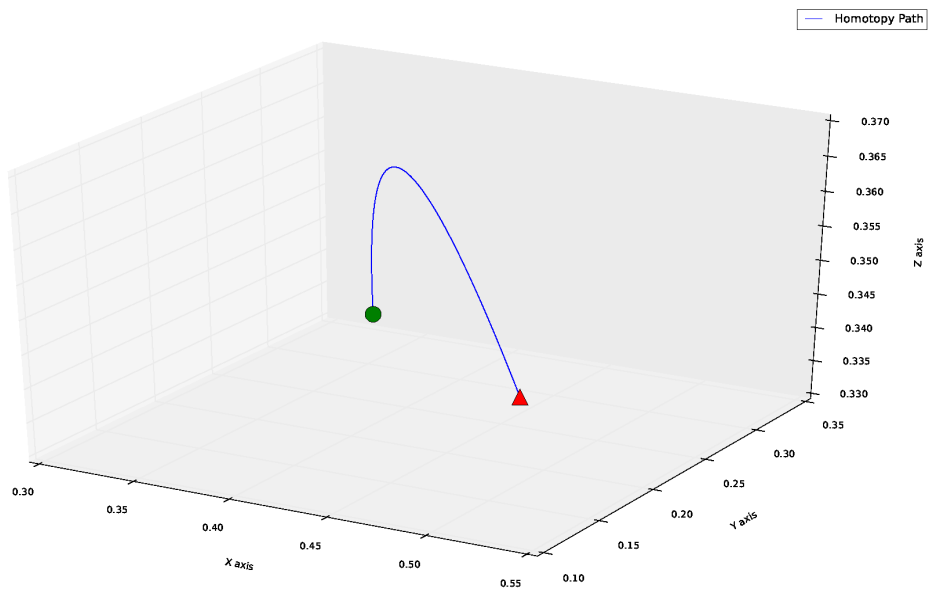}
\caption{The homotopy path generated with the following parameters $\ld_1 = (1,1,1)$, $\ld_2 = ( 1.39328599, 8.30098374, 3.98355604)$ and $s = 1$. The first equilibrium point $E_0 = (1/3,1/3,1/3)$ is represented by a green ball, while the final equilibrium point $E_1 = (  0.53112814, 0.1203633, 0.34850856)$ is rendered as a red triangle. Of course all over the path the constraint $e_1 + e_2 + e_3 = s$ holds.} \label{fig::homotopy_3d}
\end{center}
\end{figure}

We have also run the homotopy path tracing procedure with $n = 100$. The procedure allows to find efficiently the right equilibrium point in the second fiber, which would have been otherwise difficult to determine.

\section{Control}

As shown in section~\ref{sec::fibers_over_e}, for a given value of a desirable equilibrium $\e=(e_1, \cdots, e_n)$, one can find easily the values parameter set $\ld$ that would lead the system to this point, provided the initial state $\x(0) = (x_1(0), \cdots, x_n(0))$ satisfies $\sum_i^n x_i(0) = \sum_i^n e_i$. 

Hence the reachable set for a given initial state vector $x(0)$ is obviously the intersection between the cube $[0,1]^n$ and the hyperplane defined by $\sum_i^n x_i = \sum_i^n x_i(0)$. The open-loop command that would asymptotically lead to an equilibrium point $\e$ lying in the reachable set can be defined by taking any value of the parameter set from $\sigma_{\e}$.

\section{Conclusion}

We have studied the equilibrium locus of the flow generated by a circular network of an arbitrary number of cells. Such a model appears for example in the ribosome flow model on a ring. We have proven that for every value of the parameter vector $\ld$, the equilibrium locus is a smooth curve. Moreover, we have shown how to effectively compute the equilibrium point for a given value of $\ld$ and a given value of first integral made by the sum of system coordinates. 

We have proven that when considering the set of all possible values of the parameters, the equilibrium locus is a smooth manifold with boundaries, while for a given value of the parameters, it is an embedded smooth and connected curve. For different values of the parameters, the curves are all isomorphic. 

Moreover, we have shown how to build a homotopy between different curves obtained for different values of the parameter set. This procedure allows the efficient computation of the equilibrium point for each value of some first integral of the system. This point would have been otherwise difficult to be computed for higher dimensions. We have also performed some numerical experiments that illustrate this construction.   

Finally, we considered the control of such a system in open-loop. We showed how the system can be driven asymptotically to any reachable equilibrium, when the parameter set can be viewed as a vectorial input. 

\section*{Acknowledgement}
I am grateful to Prof. Michael Margaliot for having introduced me to the subject and for useful discussions. I would like also to thank Dr. Serge Lukasiewicz for useful remarks.

\bibliographystyle{amsplain}

\begin{thebibliography}{10}

\bibitem{Ehresmann}
C. Ehresmann, Les connexions infinit\'esimales dans un espace fibr\'e diff\'erentiable, Colloque de Topologie, Bruxelles 1950, Paris 1951, pp. 29-55.

\bibitem{Lazard-Rouillier-2007}
D. Lazard and F. Rouillier, Solving Parametric Polynomial Systems, Journal of Symbolic Computations, vol. 42, Issue 6, June 2007. 

\bibitem{Lee-2013}
J. Lee, Introduction to Smooth Manifolds, 2$^{nd}$, Springer, 2013.

\bibitem{Li-97}
T.Y.Li, Numerical solution of multivariate polynomial systems by homotopy continuation methods, Acta Numerica, 6:399-436, 1997.

\bibitem{Mather-2012}
J. Mather, Notes on Topological Stability, Bulletin of the American Mathematical Society, 49 (4), 2012. 

\bibitem{Raveh-all-2015}
A. Raveh, Y. Zarai, M. Margaliot and T. Ruller, 
Ribosome Flow Model on a Ring, IEEE/ACM Transactions on Computational Biology and Bioinformatics, 2015. 


\end{thebibliography}

\end{document}